\begin{document}

\newcommand{\ci}[1]{_{ {}_{\scriptstyle #1}}}
\newcommand{\ti}[1]{_{\scriptstyle \text{\rm #1}}}
\newcommand{\ut}[1]{^{\scriptstyle \text{\rm #1}}}

\newcommand{\fdot}{\,\cdot\,}

\newcommand{\norm}[1]{\ensuremath{\left\|#1\right\|}}
\newcommand{\abs}[1]{\ensuremath{\left\vert#1\right\vert}}
\newcommand{\p}{\ensuremath{\partial}}
\newcommand{\pr}{\mathcal{P}}

\newcommand{\pbar}{\ensuremath{\bar{\partial}}}
\newcommand{\db}{\overline\partial}
\newcommand{\D}{\mathbb{D}}
\newcommand{\B}{\mathbb{B}}
\newcommand{\Sp}{\mathbb{S}}
\newcommand{\T}{\mathbb{T}}
\newcommand{\R}{\mathbb{R}}
\newcommand{\Z}{\mathbb{Z}}
\newcommand{\C}{\mathbb{C}}
\newcommand{\N}{\mathbb{N}}
\newcommand{\scrH}{\mathcal{H}}
\newcommand{\scrL}{\mathcal{L}}
\newcommand{\td}{\widetilde\Delta}
\newcommand{\wt}{\widetilde}

\newcommand{\e}{\varepsilon}

\newcommand{\cM}{\mathcal{M}}
\newcommand{\cX}{\mathcal{X}}
\newcommand{\cY}{\mathcal{Y}}

\newcommand{\La}{\langle }
\newcommand{\Ra}{\rangle }
\newcommand{\rk}{\operatorname{rk}}
\newcommand{\card}{\operatorname{card}}
\newcommand{\ran}{\operatorname{Ran}}
\newcommand{\osc}{\operatorname{OSC}}
\newcommand{\im}{\operatorname{Im}}
\newcommand{\re}{\operatorname{Re}}
\newcommand{\tr}{\operatorname{tr}}
\newcommand{\dist}{\operatorname{dist}}
\newcommand{\supp}{\operatorname{supp}}

\newcommand{\fA}{\mathfrak{A}}
\newcommand{\fB}{\mathfrak{B}}

\newcommand{\vf}{\varphi}
\newcommand{\f}[2]{\ensuremath{\frac{#1}{#2}}}

\newcommand{\clos}{\operatorname{Clos}}
\newcommand{\inter}{\operatorname{Int}}


\newcommand{\entrylabel}[1]{\mbox{#1}\hfill}

\newenvironment{entry}
{\begin{list}{X}%
  {\renewcommand{\makelabel}{\entrylabel}%
      \setlength{\labelwidth}{55pt}%
      \setlength{\leftmargin}{\labelwidth}
      \addtolength{\leftmargin}{\labelsep}%
   }%
}%
{\end{list}}


\numberwithin{equation}{section}

\newtheorem{thm}{Theorem}[section]
\newtheorem{lm}[thm]{Lemma}
\newtheorem{cor}[thm]{Corollary}
\newtheorem{conj}[thm]{Conjecture}
\newtheorem{prob}[thm]{Problem}
\newtheorem{prop}[thm]{Proposition}
\newtheorem*{prop*}{Proposition}

\theoremstyle{definition}
\newtheorem{df}[thm]{Definition}
\newtheorem*{df*}{Definition}

\theoremstyle{remark}
\newtheorem{rem}[thm]{Remark}
\newtheorem*{rem*}{Remark}

\title{Corona Solutions Depending Smoothly on Corona Data}

\author[S. Treil]{Sergei Treil$^\dagger$}
\address{Sergei Treil, Department of Mathematics\\ Brown University\\ 151 Thayer Street\\ Providence, RI USA 02906}
\email{treil@math.brown.edu}
\thanks{Research supported in part by a National Science Foundation DMS grant \#0800876.}

\author[B. D. Wick]{Brett D. Wick$^\ddagger$}
\address{Brett D. Wick, School of Mathematics\\ Georgia Institute of Technology\\ 686 Cherry Street\\ Atlanta, GA USA 30332-0160}
\email{wick@math.gatech.edu}
\thanks{Research supported in part by a National Science Foundation DMS grants \# 1001098 and \# 0955432.}

\thanks{This note began at the workshop ``The Corona Problem: Connections Between Operator Theory, Function Theory and Geometry'''.  The authors thank the Fields Institute for the pleasant working conditions.}

\keywords{Corona Problem, Bezout Equation}

\begin{abstract} 
In this note we show that if the Corona data depends continuously (smoothly) on a parameter, the solutions of the corresponding Bezout equations can be chosen to have the same smoothness in the parameter. 
\end{abstract}

\maketitle

\section{Introduction and Main Results}

\subsection{Background}
Let $H^\infty:=H^\infty\left(\D\right)$ denote the collection of all bounded analytic functions on the unit disc $\D$.  The classical Carleson Corona Theorem, see \cite{Carl}, states that 
if functions
\(f_{j}\in H^{\infty}\) are such that
\(\sum_{j=1}^{N}\abs{f_{j}}^{2}\geq\delta^2>0\), then  there exists
functions $g_{j}\in H^{\infty}$ such that
$\sum_{j=1}^{N}g_{j}f_{j}=1$.   This is equivalent to the fact that the unit disc $\D$ is dense in the
maximal ideal space of the algebra $H^\infty$, but the importance of 
the
Corona Theorem goes much beyond the theory of maximal ideals of
$H^\infty$.  The Corona Theorem, and especially its generalization, the so called
Matrix (Operator) Corona Theorem play an important role in 
operator
theory (such as the angles between invariant subspaces, unconditionally convergent
spectral decompositions, computation of spectrum, etc.).

The Vector Valued Corona Theorem says that if  $f=(f_k)_{k=1}^\infty\in H^\infty_{\ell^2}$ is a bounded 
analytic function whose values are in $\ell^2$ such that 
\begin{equation}
\label{C}
\tag{C}
1\ge \sum_{k=1}^\infty \abs{f_k(z)}^2\ge\delta^2  >0, \qquad \forall z\in\D, 
\end{equation}
then there exists $g\in H^\infty_{\ell^2}$ solving the Bezout equation, 
\begin{equation}
\label{B}
\tag{B}
g^T(z)f(z):=\sum_{k=1}^\infty g_k(z) f_k(z)\equiv 1\quad\forall z\in\D.
\end{equation}
Moreover, the function $g$ can be chosen to satisfy (for  $\delta\le1/2$)
\begin{align}
\label{CorEst-01}
\|g(z)\|\ci{\ell^2} \le C\frac{1}{\delta^2}\log\frac1{\delta} \qquad \forall z\in \D,
\end{align}
with $C$ an absolute constant.  Note also that the above condition \eqref{C} is necessary for the existence of the function $g$ appearing in \eqref{B}.  Condition \eqref{C} is usually called the  Carleson Corona Condition (or simply the Corona Condition).  Condition \eqref{B} is named so after the Bezout equation.

The main result of this paper is that if the Corona data $f$ depends smoothly on a parameter $s$, then one can find a solution $g$ also depending smoothly on the parameter $s$.



\subsection{Preliminaries} 
To give the precise statements we  need to introduce some notation. 
\label{s:prelim}
\subsubsection{Function spaces \(C^r\left(K;H^\infty\right) \) and \(H^\infty\left(C^r(K)\right)\)}
Let $K$ be a compact Hausdorff space. We say that a function $f:\D\times K\to \C$ belongs to $C\left(K;H^\infty\right)$ if $f(\fdot , s)\in H^\infty$ for any $s\in K$ and the function $s\mapsto f (\fdot, s)$ is a continuous function on $K$ (with values in the Banach space $H^\infty$). Since $K$ is compact, the above continuity simply means that the function $f$ is uniformly continuous in the variable $s\in K$ on $\D\times K$. Thus the space $C\left(K;H^\infty\right)$ can be identified with the space of functions $f:\D\times K\to \C$ which are analytic in the variable $z\in\D$ and uniformly continuous on $\D\times K$ in the variable $s\in K$.  In other words, for any $\e>0$ there exists $\delta>0$ that for all $z\in \D$ and for all $s, s'\in K$ the inequality $|f(z, s)-f(z, s')|<\e$ holds whenever $|s-s'|<\delta$.

We can also define the space $H^\infty\left(C(K)\right)$ as the space of functions $f:\D\times K\to \C$ such that the function $z\mapsto f(z, \fdot)$ is a bounded analytic function with values in $C(K)$. Since for functions with values in a Banach space weak and strong analyticity coincide, see  for example \cite{Nik-book-v1}*{Theorem 3.11.4}, the space $H^\infty\left(C(K)\right)$ can be identified with the collection of bounded functions $f:\D\times K\to \C$ which are analytic in the variable $z\in \D$ and continuous in the variable $s\in K$.

If \(K\) has a smooth structure, for example if \(K\) is a compact subset of $\R^d$ or a smooth manifold such that $K =\clos \left(\inter K\right)$, we can define smooth versions of these function spaces.  The spaces $C^r\left(K;H^\infty\right)$, $r\in \N\cup\infty$ is the space of functions $f:\D\times K\to \C$ such that the vector-valued function $s\mapsto f(\fdot, s)\in H^\infty$  has all its partial derivatives (in the strong sense) up to order $r$ existing on $\inter K$  and are continuous up to the boundary.  We will identify the function with the derivative of order $0$, so the function itself is continuous up to the boundary. Similarly, one can say that $C^r\left(H^\infty\right)$ consists of functions $f:\D\times K\to \C$ which are analytic in the variable $z\in \D$ and all the partial derivatives in the variable $s\in K$ up to order $r$ exist and are uniformly continuous in $s$ on $\D\times \inter K$.

We will also consider vector-valued versions $C^r \left(K; H^\infty_{\ell^2}\right)$ of these spaces, consisting of all functions $f : \D\times K \to \ell^2$ such that the function $s \mapsto f(\fdot, s) $ is a $C^r$ function with values in the vector-valued space $H^\infty_{\ell^2}$, i.e. $H^\infty$ with values in  $\ell^2$.

We can also define the space of functions $H^\infty(C^r(K))$, which is the space of functions $f:\D\times K\to \C$ which are analytic in the variable $z\in \D$ and such that all partial derivatives in $s$ up to the order $r$ exist  on $\D\times \inter K$, are  continuous and bounded there, and for all $z\in \D$ extend continuously to the boundary of $K$.   Again, in a similar manner, we also consider the vector-valued version $H^\infty \left(C^r_{\ell^2}(K)\right) $ of such spaces, consisting of all functions $f :\D\times K \to \ell^2$ such that the function $z\mapsto f(z, \fdot)$ is a bounded analytic function with values in the vector-valued space $C^r_{\ell^2}(K)$, i.e., $C^r(K)$ with values in $\ell^2$.

Note, that the only difference between the spaces $C^r\left(K;H^\infty\right)$ and $H^\infty\left(C^r(K)\right)$ is that in the former we require the uniform continuity in $s$ on $\D\times K$ of the derivatives in $s$ of order $r$, and in the latter not.

\subsubsection{Domains with uniformly continuous path metric}
\label{s:PathMetrUinfCont}
For an open connected subset $\Omega$ of a smooth manifold in addition to the metric $\rho$ inherited from the manifold we can define the \emph{path metric} $\rho\ti p(x,y)$ as the infimum of the lengths of the paths in $\Omega$ connecting $x$ and $y$. Clearly $\rho\le \rho\ti p$, and $\rho\ti p$ is continuous with respect to $\rho$.

We are interested in domains where the path metric is \emph{uniformly continuous}, because   for such domains 
$ H^\infty\left(C^r(K)\right)\subset C^{r-1}\left(K;H^\infty\right)$.   An example of a domain with uniformly continuous path metric is the so-called weakly uniform domains. 
We say that a domain $\Omega$ (in $\R^d$ or in a smooth manifold) is \emph{weakly uniform}  if there exists $M<\infty$ such that  any two points $x_1, x_2\in \Omega$ can be connected by a smooth path in $\Omega$ of length at most $M\dist(x_1, x_2)$. 
For example, a bounded domain with a smooth boundary is definitely a weakly uniform domain.  If $\inter K$ is a weakly uniform domain, then an easy application of the Mean Value Theorem shows that $ H^\infty\left(C^r(K)\right)\subset C^{r-1}\left(K;H^\infty\right)$.

\subsection{Main Results}
The first theorem gives us smooth dependence of the solution of the Corona Bezout equation on a parameter.  
If $r\ge 1$ we assume that  $K$ is a compact subset of $\R^d$ or of a smooth manifold, such that $K=\clos\left(\inter K\right)$. For $r=0$ we only assume that $K$ is a compact Hausdorff space. 
\begin{thm}
\label{t:main-01}
Let $r\in\Z_+$ and let $f=(f_k)_{k=1}^\infty\in C^r \left(K;H^\infty_{\ell^2}\right)$  be such that 
\[
1\ge \sum_{k=1}^\infty \left|f_k(z, s)\right|^2\ge\delta^2 >0\qquad \forall (z,s)\in \D\times K. 
\] 
Then there exist $g=(g_k)_{k=1}^\infty\in C^r\left(K;H^\infty_{\ell^2}\right)$ such that 
\[
g^T(z,s)f(z,s) := \sum_{k=1}^\infty f_k(z,s) g_k(z,s) \equiv 1\qquad \forall (z,s)\in \D\times K. 
\]
Moreover,
\begin{align}
\label{SolEst-01}
\left\|g(z, s)\right\|_{\ell^2}  & := \left(\sum_{k=1}^\infty |g_k(z, s)|^2 \right)^{1/2}  \le 2 C(\delta) <\infty,  \\
\intertext{where $C(\delta) $ is the estimate in the Corona Theorem for $H^\infty$, and}
\label{SolEst-02}
\left\|g\right\|\ci{C^{\alpha}\left(K;H^\infty_{\ell^2}\right)} & \le C\left(\alpha, \omega_f^{-1}\left(\frac{1}{2C(\delta)}\right),  K\right) \left\|f\right\|\ci{C^{\alpha}\left(K;H^\infty_{\ell^2}\right)} \qquad \forall \alpha\in\Z_+, \ \alpha \le r;
\end{align}
here $\omega_f$ is the modulus of continuity of $f$, and \(\omega_f^{-1}\) is its inverse. 
\end{thm}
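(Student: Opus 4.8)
The plan is to avoid the $\db$-machinery entirely and instead build the parameter-dependent solution by freezing the parameter, solving the (scalar) Corona problem at each frozen value, and patching the local solutions together with a partition of unity. Two elementary algebraic observations drive the construction. First, if $g^{(1)},g^{(2)}\in H^\infty_{\ell^2}$ both solve the Bezout equation $(g^{(i)})^T f\equiv1$ and $\lambda_1+\lambda_2=1$, then $\lambda_1 g^{(1)}+\lambda_2 g^{(2)}$ solves it as well; moreover, if $\lambda_i\ge 0$, then the $\ell^2$-norm of the combination is pointwise at most $\max_i\|g^{(i)}\|\ci{\ell^2}$ by convexity. Second, a solution computed at one parameter is an approximate solution at all nearby parameters, and can be cheaply renormalized into an exact one.

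Quantitatively, set $\eta:=\omega_f^{-1}\!\left(\tfrac{1}{2C(\delta)}\right)$, so that $\|f(\fdot,s)-f(\fdot,s')\|\ci{H^\infty_{\ell^2}}<\tfrac{1}{2C(\delta)}$ whenever $s,s'\in K$ with $|s-s'|<\eta$. Using compactness, cover $K$ by finitely many balls $B_i$ of radius $\eta$ centered at points $s_i\in K$. For each $i$ apply the Vector Valued Corona Theorem to the frozen data $f(\fdot,s_i)$ to obtain $G_i\in H^\infty_{\ell^2}$ with $G_i^T f(\fdot,s_i)\equiv 1$ and $\|G_i\|\ci{H^\infty_{\ell^2}}\le C(\delta)$. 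For $s\in B_i\cap K$ put $h_i(s):=G_i^T f(\fdot,s)\in H^\infty$. Then $h_i(s)=1+G_i^T\big(f(\fdot,s)-f(\fdot,s_i)\big)$, so $\|h_i(s)-1\|\ci{H^\infty}\le C(\delta)\cdot\tfrac{1}{2C(\delta)}=\tfrac12$; hence $\abs{h_i(s)}\ge\tfrac12$ pointwise, the function $h_i(s)$ is invertible in $H^\infty$ with $\|1/h_i(s)\|\ci{H^\infty}\le 2$, and $g_i(\fdot,s):=G_i/h_i(s)$ is a genuine solution of the Bezout equation at parameter $s$ with $\|g_i(\fdot,s)\|\ci{H^\infty_{\ell^2}}\le 2C(\delta)$. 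Since $s\mapsto f(\fdot,s)$ is $C^r$ into $H^\infty_{\ell^2}$, the $H^\infty$-valued map $s\mapsto h_i(s)$ is $C^r$, and because $h_i(s)$ stays in the region $\{\|h-1\|\ci{H^\infty}\le 1/2\}$, where inversion in the Banach algebra $H^\infty$ is real-analytic, $s\mapsto g_i(\fdot,s)$ is $C^r$ into $H^\infty_{\ell^2}$.

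Next choose a partition of unity $\{\chi_i\}$ subordinate to $\{B_i\}$ with $\chi_i\ge 0$, $\sum_i\chi_i\equiv 1$ on $K$, and (for $r\ge1$) $\|\partial^\beta\chi_i\|_\infty\le C_\beta\,\eta^{-|\beta|}$, obtained by normalizing rescaled bump functions; for $r=0$ and $K$ merely compact Hausdorff one uses a continuous partition of unity subordinate to the finite cover. Define $g(\fdot,s):=\sum_i\chi_i(s)\,g_i(\fdot,s)$. By the first algebraic observation $g^T(\fdot,s)f(\fdot,s)\equiv\sum_i\chi_i(s)=1$, and by convexity $\|g(z,s)\|\ci{\ell^2}\le\sum_i\chi_i(s)\,\|g_i(z,s)\|\ci{\ell^2}\le 2C(\delta)$, which is \eqref{SolEst-01}. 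As a finite sum of products of $C^r$ functions, $g\in C^r\!\left(K;H^\infty_{\ell^2}\right)$.

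Finally, the quantitative estimate \eqref{SolEst-02} follows by differentiating $g=\sum_i\chi_i g_i$. The Leibniz rule distributes $\partial^\gamma$, $|\gamma|\le\alpha$, onto the factors; the patching factors contribute $\|\partial^{\beta}\chi_i\|_\infty\le C_\beta\,\eta^{-|\beta|}$, while the derivatives of $g_i=G_i/h_i(s)$ are handled by the Fa\`a di Bruno expansion of $1/h_i$, using $\|1/h_i(s)\|\ci{H^\infty}\le2$, $\|G_i\|\ci{H^\infty_{\ell^2}}\le C(\delta)$, and $\|\partial^{\beta}h_i(s)\|\ci{H^\infty}=\|G_i^T\partial^\beta f(\fdot,s)\|\ci{H^\infty}\le C(\delta)\,\|f\|\ci{C^\alpha(K;H^\infty_{\ell^2})}$. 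Collecting these bounds produces a constant depending only on $\alpha$, on the scale $\eta=\omega_f^{-1}\!\big(\tfrac{1}{2C(\delta)}\big)$ (through the number of patches and the partition-of-unity derivatives), and on the geometry of $K$, which is the form asserted in \eqref{SolEst-02}. The one genuinely technical point is the construction of the patching functions with derivative bounds controlled purely by $\eta$ and $K$ on a general compact set $K=\clos(\inter K)$; this, together with the bookkeeping in the Fa\`a di Bruno step, is where the bulk of the work lies, whereas the Corona-theoretic content enters only as the black box supplying the frozen solutions $G_i$.
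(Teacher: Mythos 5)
Your proof is correct and is essentially the paper's own argument: cover $K$ at the scale $\omega_f^{-1}\left(\tfrac{1}{2C(\delta)}\right)$, solve the Corona problem with the parameter frozen at the centers of the cover, patch with a partition of unity having controlled derivatives, and invert a scalar function lying within $1/2$ of $1$ via a Neumann series. The only (inessential) difference is the order of the last two steps: you renormalize each frozen solution into an exact local solution $G_i/h_i(s)$ and then take the convex combination, whereas the paper first forms the convex combination $\widetilde g=\sum_k \eta_k g_{s_k}$ of the frozen solutions, which is a global $\tfrac12$-approximate solution, and then performs a single global renormalization $g=(\widetilde g^T f)^{-1}\widetilde g$.
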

Note that for $r=0$ the estimate \eqref{SolEst-02} is superfluous since it follows immediately from \eqref{SolEst-01}.

The next theorem can be interpreted as the Corona Theorem for the algebra $H^\infty\left(C^r(K)\right)$, $r\ge 1$. We assume here that $K$ is a subset of a smooth manifold, $K=\clos\left(\inter K\right)$ such that the path metric on $K$ is uniformly continuous (with the respect to the metric inherited from the manifold). 

\begin{thm}
\label{t:main-02}
Let $r\in\N\cup\infty$ and let $f =(f_k)_{k=1}^\infty \in H^\infty\left(C^r_{\ell^2}(K)\right)$ be such that 
\[
1\ge \sum_{k=1}^\infty \left|f_k(z, s)\right|^2\ge\delta^2 >0\qquad \forall (z,s)\in \D\times K. 
\] 
Then there exist $g=(g_k)_{k=1}^\infty \in H^\infty\left(C^r_{\ell^2}(K)\right)$ such that 
\[
g^T(z,s) f(z,s) := \sum_{k=1}^\infty f_k(z,s) g_k(z,s) \equiv 1\qquad \forall (z,s)\in \D\times K. 
\]
Moreover,
\begin{align}
\label{SolEst-03}
\left\|g(z, s)\right\|_{\ell^2} 
& \le 2C(\delta) <\infty \qquad \forall (z,s)\in \D\times K,  \\
\intertext{where $C(\delta) $ is the estimate in the Corona Theorem for $H^\infty$, and}
\label{SolEst-04}
\left\|g\right\|\ci{H^\infty\left(C_{\ell^2}^\alpha(K)\right)}  
& \le C\left(\delta, \alpha, K,  \left\| f\right\|\ci{C^1_{\ell^2}(K)}\right)  \left\|f\right\|\ci{H^\infty\left(C_{\ell^2}^\alpha(K)\right)} \qquad \forall \alpha\in\Z_+, \ \alpha \le r.
\end{align}
\end{thm}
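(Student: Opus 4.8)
The plan is to localize in the parameter $s$: freeze the Corona data on small neighborhoods, solve there with the ordinary vector Corona Theorem \eqref{CorEst-01}, upgrade each frozen solution to an \emph{exact} one by a geometric series, and finally glue the exact local solutions with a smooth partition of unity in $s$. The gluing is legitimate precisely because the Bezout equation \eqref{B} is linear in the unknown $g$: if on the support of a cutoff $\chi_i$ we have an exact solution $g^{(i)}$, then
\[
g:=\sum_i \chi_i(s)\, g^{(i)}(z,s)\quad\text{satisfies}\quad g^T f=\sum_i \chi_i(s)\,(g^{(i)})^T f=\sum_i\chi_i(s)=1 .
\]
The one structural input needed before localizing is that $s\mapsto f(\fdot,s)$ be continuous into $H^\infty_{\ell^2}$, and this is exactly where the hypothesis that the path metric on $K$ is uniformly continuous enters, through the inclusion $H^\infty\left(C^r(K)\right)\subset C^{r-1}\left(K;H^\infty\right)$ recorded in Section~\ref{s:PathMetrUinfCont}. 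Moreover, integrating along short paths and applying the Mean Value Theorem gives a Lipschitz bound $\norm{f(\fdot,s)-f(\fdot,s')}\ci{H^\infty_{\ell^2}}\le C_K \norm{f}\ci{C^1_{\ell^2}(K)}\,\dist(s,s')$, so the modulus of continuity of $f$ in the $H^\infty$ norm is controlled from above by $\norm{f}\ci{C^1_{\ell^2}(K)}$.

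For the construction, fix $s_0\in K$ and apply \eqref{CorEst-01} to the frozen data $f(\fdot,s_0)$ to obtain $u=u^{(s_0)}\in H^\infty_{\ell^2}$, \emph{independent of $s$}, with $u^T f(\fdot,s_0)\equiv1$ and $\norm{u}\ci{\ell^2}\le C(\delta)$. For $s$ nearby set $h(z,s):=u(z)^T\!\left(f(z,s_0)-f(z,s)\right)$, so that $u^T f(\fdot,s)=1-h$. By the Lipschitz bound, on the ball of radius $\eta:=\big(2C(\delta)\,C_K\,\norm{f}\ci{C^1_{\ell^2}(K)}\big)^{-1}$ one has $\norm{h}_\infty\le 1/2$, so $1-h$ is analytic in $z$, bounded away from $0$, and $C^r$ in $s$; hence $\tilde g^{(s_0)}(z,s):=u(z)/(1-h(z,s))$ is analytic in $z$, lies in $C^r$ in $s$, solves $(\tilde g^{(s_0)})^T f(\fdot,s)\equiv1$ exactly, and obeys $\norm{\tilde g^{(s_0)}(z,s)}\ci{\ell^2}\le \norm{u}\ci{\ell^2}/\abs{1-h}\le 2C(\delta)$. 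Covering the compact $K$ by finitely many balls $B(s_i,\eta)$, choosing a subordinate smooth partition of unity $\{\chi_i\}$, and setting $g=\sum_i \chi_i\,\tilde g^{(s_i)}$ produces a solution in $H^\infty\left(C^r_{\ell^2}(K)\right)$ with $\norm{g(z,s)}\ci{\ell^2}\le \sum_i \chi_i(s)\cdot 2C(\delta)=2C(\delta)$, which is exactly \eqref{SolEst-03}; note the factor $2$ and the scalar/vector Corona constant arise transparently from the geometric series.

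For the smoothness estimate \eqref{SolEst-04} I would apply $\partial_s^\beta$, $\abs{\beta}\le\alpha$, to $g=\sum_i \chi_i\,\tilde g^{(s_i)}$ and expand by the Leibniz rule. The cutoff derivatives satisfy $\norm{\partial_s^\beta\chi_i}\le C(\beta,K)\,\eta^{-\abs{\beta}}$, and since $\eta^{-1}\sim C(\delta)\,C_K\,\norm{f}\ci{C^1_{\ell^2}(K)}$ this is exactly the source of the dependence of the constant on $\delta$, $K$, and $\norm{f}\ci{C^1_{\ell^2}(K)}$. The derivatives of each $\tilde g^{(s_i)}=u/(1-h)$ are computed by Fa\`a di Bruno: $\partial_s^\beta (1-h)^{-1}$ is a universal polynomial in $(1-h)^{-1}$ (bounded by $2$) and the derivatives $\partial_s^\gamma h=-u^T\partial_s^\gamma f$; only the single top-order factor $\partial_s^\alpha f$ carries the full $\norm{f}\ci{C^\alpha_{\ell^2}(K)}$ norm linearly, while every other term is a product of strictly lower-order derivatives of $f$ and is absorbed into the constant. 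Summing in $\ell^2$ over the coordinate index $k$ and over the finite cover then yields \eqref{SolEst-04}.

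The main obstacle is the bookkeeping in the top-order estimate \eqref{SolEst-04}: one must verify that the Leibniz/Fa\`a di Bruno expansion really does isolate a single term that is linear in $\norm{f}\ci{C^\alpha_{\ell^2}(K)}$, and that the combinatorial factors together with the cutoff derivatives $\eta^{-\abs{\beta}}$ accumulate only into the stated constant. A second, more conceptual, subtlety is ensuring that the localization radius $\eta$ can be chosen \emph{uniform} over $K$ and bounded below in terms of $\norm{f}\ci{C^1_{\ell^2}(K)}$; this is precisely the point at which the uniformly-continuous-path-metric hypothesis is indispensable, since without it $f$ need not be uniformly continuous in $s$ with respect to the $H^\infty$ norm and the freezing step would break down. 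Everything else — analyticity in $z$, boundedness, and $C^r$ dependence in $s$ — follows formally once these two points are secured, and the argument is parallel to (indeed a variant of) the one behind Theorem~\ref{t:main-01}, with $\omega_f^{-1}$ there replaced here by the quantity $\eta$ governed by $\norm{f}\ci{C^1_{\ell^2}(K)}$.
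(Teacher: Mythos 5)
Your construction is, at bottom, the same as the paper's: use the inclusion $H^\infty\left(C^r(K)\right)\subset C^{r-1}\left(K;H^\infty\right)$ (valid since the path metric is uniformly continuous) to get continuity of $s\mapsto f(\fdot,s)$ into $H^\infty_{\ell^2}$, freeze the data at finitely many parameter values, solve the frozen problems by the classical vector Corona Theorem, exploit that a frozen solution solves the nearby Bezout equations up to an error of size at most $1/2$, and recombine with a smooth partition of unity. The only structural difference is the order of the last two steps: you invert $1-h$ on each patch \emph{first}, obtaining exact local solutions, and then glue (using linearity of the Bezout equation), whereas the paper glues the frozen solutions first, forming $\wt g=\sum_k\eta_k g_{s_k}$ with $\left\|1-\wt g^T f\right\|_{H^\infty}\le 1/2$, and then performs a single global correction $g=(\wt g^T f)^{-1}\wt g$. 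Both orders give the same bound $2C(\delta)$; yours makes the gluing identity trivial at the cost of finitely many local inversions, while the paper needs only one inversion but must observe that $\wt g^T f$ is a convex combination of the near-solutions. This is a variant of the paper's argument, not a genuinely different proof.

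There is, however, one step you should repair. You claim that integrating along short paths and the Mean Value Theorem give the Lipschitz bound $\norm{f(\fdot,s)-f(\fdot,s')}_{H^\infty_{\ell^2}}\le C_K\norm{f}_{C^1_{\ell^2}(K)}\dist(s,s')$. Path integration gives Lipschitz continuity with respect to the \emph{path metric} $\rho\ti{p}$ only; to pass to the ambient distance you may use only the standing hypothesis that $\rho\ti{p}$ is uniformly continuous, which yields $\norm{f(\fdot,s)-f(\fdot,s')}_{H^\infty_{\ell^2}}\le \norm{f}_{C^1_{\ell^2}(K)}\,\omega_{\rho\ti{p}}\!\left(\dist(s,s')\right)$ and nothing stronger: the theorem does not assume $\rho\ti{p}\le C_K\dist$ (that is the strictly stronger ``weakly uniform'' condition, and it fails, e.g., for the region above a H\"older but non-Lipschitz graph). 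Consequently your localization radius should be taken as $\eta=\omega_{\rho\ti{p}}^{-1}\!\left(\left(2C(\delta)\norm{f}_{C^1_{\ell^2}(K)}\right)^{-1}\right)$ rather than linear in $1/\norm{f}_{C^1_{\ell^2}(K)}$. This correction does not affect the conclusion: $\omega_{\rho\ti{p}}$ is an attribute of $K$ alone, so the resulting constant retains exactly the dependence on $\delta$, $\alpha$, $K$, and $\norm{f}_{C^1_{\ell^2}(K)}$ permitted in \eqref{SolEst-04}; indeed this modulus-of-continuity estimate is precisely how the paper passes from the hypothesis on $\rho\ti{p}$ to the machinery of Theorem \ref{t:main-01}.
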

\section{The Proofs}
\label{s:proof}

\begin{proof}[Proof of Theorem \ref{t:main-01}]
Let $f$ and $g$ be the column vectors with entries $f_k$ and $g_k$ respectively. By the Carleson Corona Theorem for infinitely many functions there exists a constant $C(\delta)$ such that for any $f\in H^\infty_{\ell^2}$ satisfying 
\[
1\ge \left|f(z)\right|\ge\delta>0\quad\forall z\in\D,
\]
there exists $g\in H^\infty_{\ell^2}$, $\left\|g\right\|\ci{H^\infty_{\ell^2}} \le C(\delta)$ such that $g^T(z) f(z) \equiv 1$ for all $z\in \D$.

Here is the main idea of how the proof will proceed.  We will use this above for each function $f(\fdot, s)$.  However, since $s\mapsto f(\fdot, s)$ is continuous we will be able to construct a perturbation of the solutions that remains close to $1$ on some open neighborhood.  Then, using a partition of unity argument, we can construct the desired function, which is again close to $1$, but now for all $s$ and all $z$.

Since $s\mapsto f(\fdot, s)$ is continuous, for each point $s\in K$ let $U_s$ be a neighborhood of $s$ such that for all $s'\in U_s$ 
\[
\left\|f(\fdot, s) - f(\fdot, s')\right\|\ci{H^\infty_{\ell^2}} \le \frac{1}{2C(\delta)}.
\] 
If $g_s(\fdot) \in H^\infty_{\ell^2}$ solves the Bezout equation $g_s^T(\fdot) f(\fdot, s)\equiv 1$ satisfying $\|g_s(\fdot)\|\ci{H^\infty_{\ell^2}} \le C(\delta)$, then clearly for $s'\in U_s$ 
\begin{align}
\label{pert-01}
\left\| 1-g_s^T (\fdot) f(\fdot , s') \right\|_{H^\infty} \le \frac{1}{2}.
\end{align}

Since, by assumption, $K$ is compact, we can take a finite cover $U_k := U_{s_k}$, $k=1, 2, \ldots, N$ such that $K \subset \bigcup_{k=1}^N U_k$, and let $\eta_k$ be a partition of unity subordinated to this covering. This means that $0\le \eta_k\le 1$, $\supp \eta_k \subset U_k$ and $\sum_1^N \eta_k(s) \equiv 1$.   Note, that if $K$ has a smooth structure then one can take $\eta_k\in C^\infty(K)$. Moreover, it is possible to construct a subcover $U_{k}$ and the $C^\infty(K)$ partition of unity $\eta_k$ such that for all $\alpha\in \N$, 
\begin{align}
\label{part-est-01}
\sum_{k=1}^N \left\| \eta_k\right\|\ci{C^\alpha(K)} \le C\left(\alpha, \omega_f\left(\frac{1}{2C(\delta)}\right), K\right)
\end{align}

For each $k$ let $ g_{s_k}(\fdot)$ be a solution of the Bezout equation 
\[
 g_{s_k}^T (\fdot) f(\fdot, s_k) \equiv 1
\]
satisfying $\left\|g_{s_k}(\fdot)\right\|\ci{H^\infty_{\ell^2}}\le C(\delta)$. Define
\[
 \wt g(z, s):= \sum_{k=1}^N \eta_k(s) g_{s_k}(z). 
\]
Then \eqref{pert-01} clearly implies that for all $s'\in K$
\begin{align}
\label{pert-02}
\left\|  1 - \wt g^T(\fdot, s') f(\fdot, s') \right\|_{H^\infty}\le \frac{1}{2}. 
\end{align}
Indeed, for $k$ such that $\eta_k(s')>0$ we have $s'\in U_{s_k}$, so the estimate \eqref{pert-01} holds  for $s=s_k$. 
But for any $s'$ the function  $\wt g(\fdot, s')$ is a convex combination of such $g_{s_k}(\fdot)$, so we get \eqref{pert-02} as a convex combination of estimates \eqref{pert-01}.

Inequality \eqref{pert-02} implies that the (scalar) function $\wt g^T(\fdot, s') f(\fdot, s')$ is invertible in $H^\infty$ and that $\left\| ( \wt g^T(\fdot, s') f(\fdot, s'))^{-1} \right\|_{H^\infty} \le 2$. Therefore the function $ g := (\wt g^T f)^{-1} \wt g$ solves the Bezout equation $g^T f\equiv 1$ and satisfies \eqref{SolEst-01}. Computing the derivatives of $g$ in $s$ and taking into account \eqref{part-est-01} we easily get \eqref{SolEst-02}. 
\end{proof}

\begin{proof}[Proof of Theorem \ref{t:main-02}]
As we mentioned above in Section \ref{s:PathMetrUinfCont}, if the path metric $\rho\ti p$ is uniformly continuous 
then 
\[
H^\infty\left(C^r(K)\right)\subset C^{r-1}\left(K;H^\infty\right),  
\] 
and the same for the $\ell^2$-valued case. In particular, $ H^\infty\left(C^r_{\ell^2}(K)\right)\subset C\left(K;H^\infty_{\ell^2}\right)$ for all $r\ge 1$.

Since $s\mapsto f(\fdot, s)$ is continuous, we can construct the partition of unity $\eta_k$ as in the proof of Theorem \ref{t:main-01} above. Note that if the path metric $\rho\ti p$ is uniformly continuous, the modulus of continuity $\omega_f$ can be estimated  via the modulus of continuity of $\rho\ti p$ and 
\[
\omega_f \left(\frac{1}{2C(\delta)}\right) \le 
\omega_{\rho\ti p}\left(\frac{1}{2C(\delta)}\right)  \left\|f\right\|\ci{C^1_{\ell^2}}.
\] 
where $\omega_{\rho\ti p}$ is the modulus of continuity of $\rho\ti p$.  The rest follows as in the proof of Theorem \ref{t:main-01}. 
\end{proof}

\section{Some Remarks}
The method of the proof allows us to get smooth dependence on the parameter for the Corona type problems in  very general situations, like in the matrix and operator Corona Problem, as well as in the Corona Problem for multipliers of the reproducing kernel Hilbert spaces.  Below, we state some sample results. The proofs are similar to what was presented in Section \ref{s:proof} and are left as an exercise for the reader.  

Let $X$ and $Y$ be Banach spaces, and let $\fA=\fA\left(\Omega; B(X,Y)\right)$ and $\fB= \fB\left(\Omega;B(Y,X)\right)$ be some Banach spaces of functions  on some set $\Omega$ with values in $B(X,Y)$ (the space of bounded operators from $X$ to $Y$) and in $B(Y,X)$ respectively.

As in Section \ref{s:prelim} we can introduce the spaces $C^r\left(K;\fA\right)$, $C^r\left(K;\fB\right)$, where $K$ is a compact set.  The space $C^r\left(K;\fA\right)$ consists of all functions $f:\Omega\times K \to B(X, Y)$ such that the function $s\mapsto f(\fdot, s)$ is an $r$ times continuously differentiable function on $K$, and similarly for $C^r\left(K;\fB\right)$. As in Section \ref{s:prelim} we assume that  $K$ is a Hausdorff compact space if $r=0$, and if $r\ge1$ we assume that $K$ is a compact subset of a smooth manifold such that $K= \clos\left(\inter K\right)$.

We also assume that \(\fB\fA\fB \subset \fB\), i.e.~that \(\fB\fA\) belongs to the multiplier algebra of \(\fB\). This implies that for \(f\in\fA\) and \(g, h\in \fB\)
\begin{align}
\label{mult-01}
\left\| g f h \right\|\ci{\fB} 
\le C(\fA, \fB) \left\|g\right\|\ci{\fB} \left\|f\right\|\ci{\fA} \left\|h\right\|\ci{\fB}  .
\end{align}

A typical example here would be multiplier spaces of (vector-valued) function spaces. Let for example \(\cX\) and \(\cY\) be  spaces of functions  on \(\Omega\) with values in \(X\) and \(Y\) respectively. Define  \(\fA\) and \(\fB\) as the multiplier function spaces between \(\cX\) and \(\cY\). Namely, \( \fA \) is the space of operator-valued functions \( \vf :\Omega \to B(X, Y) \) such that the map \( f\mapsto \vf f\) is a bounded operator from \(\cX\) to \(\cY\); similarly, the space \(\fB\) will be the collection of functions \(\vf:\Omega \to B(Y,X) \) such that the map \( f\mapsto \vf f\) is a bounded operator from  \(\cY\) to \(\cX\).  

For a concrete example, if \(\cX=H^2(X)\) and \(\cY= H^2(Y)\) then \(\fA = H^\infty \left(B(X,Y)\right) \) and \(\fB = H^\infty\left(B(Y, X)\right)\) and we are in the situation of the operator corona. This situation with \(X=\ell^2\) and \(Y = \C\) gives us the settings of Theorem \ref{t:main-01}.   Another example would be the spaces of multipliers of Bergman or Dirichlet type spaces (in the disc or more general domains in \(\C\) or \(\C^n\)). But in general we do not need to assume that the spaces \(\fA\) and \(\fB\) are multiplier spaces.

\begin{thm}
\label{t:main-03}
Under the above assumptions, let \(f\in C^r\left(K;\fA\right) \) be such that \(\|f(\fdot,s)\|\ci{\fA}\le 1\) for all \(s\in K\).  Suppose that for all \(s\in K\) there exists \(g_s\in \fB\) such that 
\[
g_s (z) f(z, s) \equiv I\quad\forall z\in\Omega, \qquad \|g_s\|\ci{\fB}\le C_0. 
\]
Then there exists \(g\in C^r(K;\fA) \) such that 
\[
g(z, s) f(z, s)\equiv I\quad \forall(z,s)\in\Omega\times K, \qquad \text{and} \qquad \| g(\fdot, s)\|\ci{\fB} \le 2C_0. 
\]
Moreover, for all \(\alpha\in \Z_+\), \(\alpha\le r\) we have the estimate 
\[
\| g\|\ci{C^\alpha\left(K;\fB\right)} \le  C \left( \alpha, \omega_f^{-1}\left(\frac{1}{2C_0 C(\fA, \fB)}\right) , K \right) 
\|f\|\ci{C^\alpha\left(K, \fB\right)}.
\]
Here, again, \(\omega_f\) is the modulus of continuity of \(f\) and \( C(\fA, \fB) \) is the constant from \eqref{mult-01}. 
\end{thm}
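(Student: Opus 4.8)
The plan is to transcribe the proof of Theorem~\ref{t:main-01} almost verbatim, replacing the scalar algebra $H^\infty$ by the left multiplier algebra of $\fB$, which I will denote $\cM(\fB)$. The whole point of the hypothesis $\fB\fA\fB\subset\fB$, quantified by \eqref{mult-01}, is to guarantee that the products I build below are genuine elements of $\cM(\fB)$ whose multiplier norms are controlled. The key observation is that if $g\in\fB$ and $f\in\fA$, then $gf$ is a $B(X,X)$-valued function acting on $\fB$ by left multiplication, and by \eqref{mult-01} its multiplier norm obeys $\|gf\|\ci{\cM(\fB)}\le C(\fA,\fB)\|g\|\ci{\fB}\|f\|\ci{\fA}$. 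Thus $\cM(\fB)$, a unital Banach algebra with identity the constant function $I$, will play the role that the scalar algebra $H^\infty$ played before.

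First I would localize and glue exactly as before. Fixing $s\in K$, I use the given $g_s\in\fB$ with $g_s f(\fdot,s)\equiv I$ and $\|g_s\|\ci{\fB}\le C_0$, together with the continuity of $s\mapsto f(\fdot,s)$, to choose a neighborhood $U_s$ on which $\|f(\fdot,s)-f(\fdot,s')\|\ci{\fA}\le\bigl(2C_0\,C(\fA,\fB)\bigr)^{-1}$. On $U_s$ one has
\[
I-g_s f(\fdot,s')=g_s\bigl(f(\fdot,s)-f(\fdot,s')\bigr),
\]
so \eqref{mult-01} bounds the multiplier norm of this residual by $C(\fA,\fB)\,C_0\,\bigl(2C_0\,C(\fA,\fB)\bigr)^{-1}=1/2$; this is the analogue of \eqref{pert-01} and accounts for the threshold $\omega_f^{-1}\!\bigl(\tfrac{1}{2C_0C(\fA,\fB)}\bigr)$ in the statement. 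Passing to a finite subcover $U_k=U_{s_k}$ and a smooth partition of unity $\eta_k$ satisfying \eqref{part-est-01}, I set $\wt g(z,s):=\sum_{k=1}^N\eta_k(s)g_{s_k}(z)$. Since $\sum_k\eta_k\equiv1$, the identity $I-\wt g f=\sum_k\eta_k(\fdot)\bigl(I-g_{s_k}f\bigr)$ displays the residual as a convex combination of terms of multiplier norm $\le1/2$, whence $\|I-\wt g f\|\ci{\cM(\fB)}\le1/2$ for every $s$.

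It follows that $\wt g(\fdot,s)f(\fdot,s)$ is invertible in $\cM(\fB)$ with inverse $\Phi:=(\wt g f)^{-1}=\sum_{n\ge0}(I-\wt g f)^n$ of norm $\le2$, so I define $g:=\Phi\,\wt g$. Since $\Phi\in\cM(\fB)$ and $\wt g\in\fB$, the product lies in $\fB$ with $\|g(\fdot,s)\|\ci{\fB}\le2\|\wt g(\fdot,s)\|\ci{\fB}\le2C_0$ (the last bound by convexity), and $gf=\Phi\,\wt g f=I$, giving the Bezout equation. For the smoothness, $s\mapsto\wt g(\fdot,s)$ is $C^\infty$ into $\fB$ (the $g_{s_k}$ are fixed, $\eta_k\in C^\infty$) while $s\mapsto f(\fdot,s)$ is $C^r$ into $\fA$; since multiplication $\fB\times\fA\to\cM(\fB)$ is bounded bilinear by \eqref{mult-01}, the map $s\mapsto\wt g f$ is $C^r$ into $\cM(\fB)$, and inversion is analytic on the invertibles of a Banach algebra, so $g$ is $C^r$. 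The $C^\alpha$ estimate then follows from the Leibniz rule and the derivative formula for the inverse (each derivative of $\Phi$ is a finite word in $\Phi$ and derivatives of $\wt g f$), fed by $\|\Phi\|\ci{\cM(\fB)}\le2$ and \eqref{part-est-01}, exactly as at the end of Theorem~\ref{t:main-01}.

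The one genuinely new point, and the step I would treat most carefully, is the passage to $\cM(\fB)$: one must verify that each product being inverted really is an element of $\cM(\fB)$ with norm governed by \eqref{mult-01}, and that inverting there (rather than in $\fA$ or $\fB$) still yields $g=\Phi\,\wt g\in\fB$ solving $gf\equiv I$. Once this bookkeeping is in place, the localization, partition-of-unity gluing, Neumann-series inversion, and derivative estimates are a direct transcription of the scalar argument.
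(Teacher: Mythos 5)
Your proposal is correct and follows essentially the same route as the paper's own proof: the same cover with threshold $\left(2C_0C(\fA,\fB)\right)^{-1}$, the same finite subcover and partition-of-unity gluing into $\wt g$, the same Neumann-series inversion of $\wt g f$ in the multiplier algebra giving $\|(\wt g f)^{-1}\|\le 2$, the definition $g=(\wt g f)^{-1}\wt g$, and the derivative estimates carried over exactly as at the end of Theorem \ref{t:main-01}. The only discrepancy is notational: the paper writes the multiplier algebra as $\cM(\fA)$, but since the standing assumption is $\fB\fA\fB\subset\fB$ (i.e., $\fB\fA$ lies in the multiplier algebra of $\fB$), your $\cM(\fB)$ is clearly the intended algebra.
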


\begin{rem*}
Note that in the above theorem we do not assume any kind of ``Corona Theorem'' for the spaces \(\fA\) and \(\fB\); we just postulate the solvability of the Bezout equations for each \(s\in K\). 
\end{rem*}

The proof of Theorem \ref{t:main-03} goes along the lines of the proof of Theorem \ref{t:main-01}. We construct a cover \(U_s\), \(s\in K\) of \(K\) such that 
\[
\left\|f(\fdot, s) - f(\fdot, s')\right\|\ci{\fA} \le \frac{1}{2C_0C(\fA, \fB)} \qquad \forall s'\in U_s, 
\] 
take a finite subcover and construct a partition of unity \(\eta_k=\eta_{s_k}\), then finally construct the function \(\wt g\), etc. 

Estimate \eqref{mult-01} implies that instead of \eqref{pert-02} we have the estimate
\[
\| I - \wt g(\fdot, s') f(\fdot, s') \|\ci{\cM(\fA)} \le \frac12, 
\]
where \(\cM(\fA)\) is the (left) multiplier algebra of \(\fA\). But that implies that 
\[
\| \left( \wt g(\fdot, s') f(\fdot, s') \right)^{-1} \|\ci{\cM(\fA)} \le 2
\]
so all the estimates in Theorem \ref{t:main-03} follow.


\begin{bibdiv}
\begin{biblist}

\bib{Carl}{article}{
   author={Carleson, Lennart},
   title={Interpolations by bounded analytic functions and the corona
   problem},
   journal={Ann. of Math. (2)},
   volume={76},
   date={1962},
   pages={547--559}
}

\bib{Garnett}{book}{
   author={Garnett, John B.},
   title={Bounded analytic functions},
   series={Graduate Texts in Mathematics},
   volume={236},
   edition={1},
   publisher={Springer},
   place={New York},
   date={2007},
   pages={xiv+459}
}


\bib{Nik-book-v1}{book}{
  title = {Operators, functions, and systems: an easy reading. {V}ol. 1},
  publisher = {American Mathematical Society},
  year = {2002},
  author = {Nikolski, Nikolai K.},
  volume = {92},
  pages = {xiv+461},
  series = {Mathematical Surveys and Monographs},
  address = {Providence, RI},
  note = {Hardy, Hankel, and Toeplitz, Translated from the French by Andreas
        Hartmann},
  isbn = {0-8218-1083-9}
}

\bib{Nik}{book}{
   author={Nikol{\cprime}ski{\u\i}, N. K.},
   title={Treatise on the shift operator},
   series={Grundlehren der Mathematischen Wissenschaften [Fundamental
   Principles of Mathematical Sciences]},
   volume={273},
   note={Spectral function theory;
   With an appendix by S. V. Hru\v s\v cev [S. V. Khrushch\"ev] and V. V.
   Peller;
   Translated from the Russian by Jaak Peetre},
   publisher={Springer-Verlag},
   place={Berlin},
   date={1986},
   pages={xii+491}
}

\bib{Fuhr}{article}{
   author={Fuhrmann, Paul A.},
   title={On the corona theorem and its application to spectral problems in
   Hilbert space},
   journal={Trans. Amer. Math. Soc.},
   volume={132},
   date={1968},
   pages={55--66}
}

\bib{Rosen}{article}{
   author={Rosenblum, Marvin},
   title={A corona theorem for countably many functions},
   journal={Integral Equations Operator Theory},
   volume={3},
   date={1980},
   number={1},
   pages={125--137}
}

\bib{Tolo}{article}{
   author={Tolokonnikov, V. A.},
   title={Estimates in the Carleson corona theorem, ideals of the algebra
   $H^{\infty }$, a problem of Sz.-Nagy},
   language={Russian, with English summary},
   note={Investigations on linear operators and the theory of functions,
   XI},
   journal={Zap. Nauchn. Sem. Leningrad. Otdel. Mat. Inst. Steklov. (LOMI)},
   volume={113},
   date={1981},
   pages={178--198, 267}
}

\bib{MR2449054}{article}{
   author={Treil, Sergei},
   author={Wick, Brett D.},
   title={Analytic projections, corona problem and geometry of holomorphic
   vector bundles},
   journal={J. Amer. Math. Soc.},
   volume={22},
   date={2009},
   number={1},
   pages={55--76}
}

\bib{MR2158178}{article}{
   author={Treil, Sergei},
   author={Wick, Brett D.},
   title={The matrix-valued $H^p$ corona problem in the disk and
   polydisk},
   journal={J. Funct. Anal.},
   volume={226},
   date={2005},
   number={1},
   pages={138--172}
}

\bib{Uchiyama}{article}{
   author={Uchiyama, A.},
   title={Corona Theorems for Countably Many Functions and Estimates for their Solutions},
   year={1980},
   status={preprint}
}


\end{biblist}
\end{bibdiv}


\end{document}